\newcommand{\n}{\noindent}
\newcommand{\vp}{\varepsilon}
\newcommand{\bb}[1]{\mathbb{#1}}
\newcommand{\cl}[1]{\mathcal{#1}}
\newcommand{\ovl}{\overline}
\theoremstyle{plain}
\newtheorem{thm}{Theorem}[section]
\newtheorem{lem}[thm]{Lemma}
\newtheorem{pro}[thm]{Proposition}
\newtheorem{cor}[thm]{Corollary}
\theoremstyle{definition}
\newtheorem{dfn}[thm]{Definition}
\theoremstyle{remark}
\newtheorem{rem}[thm]{Remark}
\numberwithin{equation}{section}
\def\tilde{\widetilde}
\renewcommand{\tilde}{\widetilde}
\def\U{{\cl U}}
\def\CC{\bb C}
\def\N{\bb N}
\def\B{\bb B}
\begin{document}
\def\d{\delta}

 \title{Strong convergence for reduced free products}

\author{by\\
Gilles  Pisier\\
Texas A\&M University and UPMC-Paris VI \footnote{Partially supported by  
    ANR-2011-BS01-008-01.}}

\maketitle
\begin{abstract}
Using   an inequality
 due to Ricard and Xu,
we give a different proof of Paul Skoufranis's  recent result
showing that the strong convergence of possibly non-commutative random variables $X^{(k)}\to X$  is stable
under reduced free product with a fixed non-commutative random variable $Y$. In fact we obtain a more general fact: assuming
that the families $X^{(k)}=\{X_i^{(k)}\}$ and $Y^{(k)}=\{Y_j^{(k)}\}$ are $*$-free as well as their   limits (in moments)  $X =\{X_i \}$ and $Y =\{Y_j\}$, the strong convergences
$X^{(k)}\to X$ and $Y^{(k)}\to Y$ imply that of 
 $\{X^{(k)},Y^{(k)} \}$   to $\{X ,Y\}$. Phrased in more striking language:
 the reduced free product is ``continuous" with respect to strong convergence. The analogue for 
 weak convergence (i.e. convergence of all moments) is obvious.
 Our approach extends to the amalgamated free product, left open by
 Skoufranis.
 \end{abstract}
 
 By a faithful $C^*$-probability space, we mean a unital $C^*$-algebra equipped with a state for which the GNS representation is faithful (it suffices for this that the state be faithful).
 We say that a family $\{X_m\mid m\in {\cl I} \}\subset A$
  generates $A$ if  $A$ is the smallest unital $C^*$-subalgebra
  of $A$ containing this family.  \\
  Let $(A^{(k)},\phi^{(k)})$ and $(A ,\phi)$ be faithful $C^*$-probability spaces ($k\ge 1$).
 Let $\{X^{(k)}_m\mid m\in {\cl I} \}\subset A^{(k)} $ (resp. $\{X_m\mid m\in {\cl I} \}\subset A$)   be    families
    generating $A^{(k)} $ (resp. $A$).
 \\ We say that  $\{X^{(k)}_m\mid m\in {\cl I} \}$
tends strongly  to  $\{X_m\mid m\in {\cl I} \}$ 
and we write
$$    \{X^{(k)}_m\mid m\in {\cl I} \} \stackrel{s}{\to} \{X_m\mid m\in {\cl I} \}   $$
if for any 
polynomial
   $P$ in the non-commutative variables $\{x_m,x_m^*\mid m\in {\cl I} \}$ 
   (these are called $*$-polynomials)
   we have  $\phi^{(k)}(P(X^{(k)}_m))\to \phi(P(X_m))$ (this is called the
   convergence in $*$-moments) and moreover
   $$\|P(X^{(k)}_m)\|\to \|P(X_m)\|.$$
   
   For Hermitian random matrices, when ${\cl I}$ is a singleton,
    this was called
 the phenomenon ``no eigenvalues outside (a small neighbourhood of) the support of
the limiting distribution" in \cite{BS}, where Bai and Silverstein obtained the case of  single  random 
 covariance (Hermitian)  $k\times k$-matrix; this was continued in \cite{PaS}.
See \S \ref{RM} below for a clarification 
of the meaning of strong convergence for
 a single Hermitian $k\times k$-matrix or more generally
  when the  families $\{X^{(k)}_m\mid m\in {\cl I} \}$ are formed of commuting Hermitian (or 
normal) operators.
 
 The notion of strong convergence, which was formally introduced in \cite{M},
was  inspired by Haagerup and Thorbj{\o}rnsen's paper \cite{HT}.
They prove there that if $\{ X_m^{(k)}(\omega)\mid m\in {\cl I} \}$ are independent   random $k \times k$-matrices the entries of which are all
 independent  complex Gaussian  $N(0,k^{-1})$, then for almost all $\omega$, the   random matrices 
$\{ X^{(k)}(\omega)\mid m\in {\cl I} \}$ tend strongly to a $*$-free
circular family
$\{X_m\mid m\in {\cl I} \}$.
More recent examples of strong convergence for infinite families
(i.e. when ${\cl I}$ is infinite) are obtained 
by Collins and Male in \cite{CM}: In particular, strong convergence
to the normalized Haar measure on the unit circle
also holds almost surely for i.i.d. families of unitary random matrices of size $k$ when $k\to \infty$. Additional examples   of strong convergence for families
can be found in Schultz's \cite{Sc} (real and symplectic Gaussian
random matrices, in other words   GOE and GSE),
and in Anderson's \cite{A} (Wigner matrices). Related error estimates appear 
 in \cite{HST}. 
 
 Strong convergence is also connected
 to operator space theory, via the so-called ``linearization tricks"
 from \cite{Pi} and \cite{HT}. We briefly describe this link in \S \ref{OS}. 
 
The results below are motivated by work by Camille Male \cite{M},
 who first considered the question of the stability of strong convergence, and by D. Shlyakhtenko's proof (see the appendix of \cite{M}) that the reduced free product with the $C^*$-algebra generated by free creators
 on the Fock space satisfies the desired stability property.
 Very recently, this was generalized by P. Skoufranis \cite{S}
 to essentially all reduced free products. In this note we give a different more direct proof based on an inequality due to \'E. Ricard and Q. Xu, which is a generalization to arbitrary reduced free products
 of results  proved previously by Voiculescu, Haagerup and Buchholz (see \cite{RX})
 for free products of groups. Our proof yields actually
 a stronger stability than the one appearing in \cite{S}, involving two limits as described in the abstract, but P. Skoufranis informed us that the original proof of \cite{S}
 also yields that improvement. In the final section, we extend our approach to the amalgamated free product, in answer to a question raised in \cite{S}.

 \section{A rough outline}
The main point to prove the result stated in the abstract is this:
if we are dealing with  
$P$ that is a polynomial in $X$'s and $Y$'s
that are $*$-free
and we want to compute its norm,
we observe that if $P$ is of (joint) degree at most $d$
then $Q=(P^*P)^m$ will be of degree at most $2md$.

The Ricard-Xu non-commutative Khintchine inequality (\cite{RX})
is
 \begin{equation}\label{x11}(4d)^{-1} kh(P) \le \|P\| \le (2d+1)^2 \ kh(P).\end{equation}

This gives us
$$(8md)^{-1} kh(Q) \le \|Q\| \le (4md+1)^2 \ kh(Q)$$
where $kh(Q)$ is a certain expression
(actually  a norm depending on $md$) that we will need to analyse below.
\\
Fix $\vp>0$.
The last inequality gives us
that if $m=m(d,\vp)$ is fixed but chosen large enough so that $(\max\{8md,(4md+1)^2\})^{1/2m}<1+\vp$ 
then we have
$$(1+\vp)^{-1} [kh((P^*P)^m)]^{1/2m}\le \|P\|\le (1+\vp)[kh((P^*P)^m)]^{1/2m}.$$

Thus to show the strong convergence
of ${X^{(k)}},Y^{(k)}$ to ${X },Y$ it suffices to show
that for $m$ fixed and $Q=(P^*P)^m$ we have
$$[kh(Q({X^{(k)}},Y^{(k)})]^{1/2m} \to  [kh(Q({X^{}},Y)]^{1/2m}   ,$$
or merely 
 \begin{equation}\label{00}kh(Q({X^{(k)}},Y^{(k)})) \to  kh(Q({X},Y))   .\end{equation}
But now a closer look at $ kh(Q({X},Y))$  in \S \ref{sec3} will show that
 this holds.
 
  \section{GNS construction and a specific notation}
 
 We choose the convention to have all inner products $\langle y,x\rangle$
  linear $x$ and antilinear in $y$.\\
In the sequel,   we denote by $X\otimes Y$ the algebraic tensor product
of two Banach spaces.  \\
Given Hilbert spaces $H,\cl H$ and 
 $C^*$-subalgebras $A\subset B(H)$
and  $B\subset B({\cl H})$ we denote as usual by $A\otimes_{\min} B$
the closure of $A\otimes B$ in the space $B(H\otimes_2 {\cl H})$, and by
$\|\cdot\|_{\min} $ the induced norm.

 \subsection{}\label{b} Let ${\cl A}$ be a unital $*$-algebra,  assumed sitting inside some ambient unital $C^*$-algebra. In the sequel, we  always   make this assumption
for our unital $*$-algebras.
  By  a state on ${\cl A}$ we mean
a linear functional such that $\phi(1)=1$ and $\phi(x^*x)\ge 0$ for all $x\in {\cl A}$.
Given this, the classical GNS construction produces a Hilbert space denoted by $L_2(\phi)$
and a $*$-homomorphism $\pi_\phi:\ {\cl A}\to B(L_2(\phi))$ equipped with a distinguished cyclic unit vector
 $ \xi_\phi\in L_2(\phi)$, such that
$\phi(c)=\langle  \xi_\phi,\pi_\phi(c) \xi_\phi\rangle$ for any $c\in {\cl A}$.
Let  $\pi=\pi_\phi$ and $\xi=\xi_\phi$ for simplicity. Let $A=\overline{\pi({\cl A})}\subset B(L_2(\phi))$.
Then $A$ is a unital $C^*$-algebra. Let 
 $\hat \phi(a) =\langle \xi,\pi(a) \xi\rangle$ for any $a\in A$. Then $\hat \phi$ is a state 
 on $A$, $L_2(\hat\phi)\simeq L_2(\phi)$  and the representation $A\subset B(L_2(\phi))$ can be identified with
 the result of the GNS construction applied to $(A,\hat \phi)$. We view the original algebra 
 ${\cl A}$ as acting on $L_2(\phi)$ by the correspondence $a\mapsto \pi(a)$. Although this action
 may be non injective on  ${\cl A}$,
  the resulting GNS representation $A\subset B(L_2(\phi))$  is (by definition)  faithful  on $A$.
\subsection{}\label{b2}
  
  Let $H=L_2(\phi)$.  For any $x\in B(H)$ we denote by ${}^{t}x \in B(H^*)$
  the adjoint operator. 
  Let $A^{op}$ denote the opposite of $A$ i.e. the same 
  as $A$ but with reverse multiplication (i.e. we set $a\cdot b=ba$).
  We then 
  define $\pi^{op}: \ A^{op}\to B(H^*)$ by
  $\pi^{op}(a)=  {}^{t}\pi(a) \in B(H^*)$. Note that  $\pi^{op}$
  is a $*$-homomorphism on $A^{op}$. Let
  $  \xi^{op}\in H^*$ denote the linear form on $H$ defined by
  $$\forall h\in H\quad  \xi^{op}(h)=\langle \xi, h\rangle.$$
  Note that $  \xi^{op}\in H^*$ could be identified with $\bar \xi\in \bar H$.
  Moreover,  $\pi^{op}$ can be viewed as the GNS representation
  associated to $\phi$   viewed as a state on $A^{op}$, with cyclic vector
  $\xi^{op}$. 
  
  \n{\bf Notation.} In the sequel we will work with
  a dense $*$-subalgebra ${\cl A}\subset A$ .
  It will be convenient to use the following notation valid 
  for all $x\in A$, but used mostly  for all $x $ in ${\cl A}$:
       \begin{equation}\label{x22}  \pi(x)\xi =x \xi   \ {\rm and} \ \pi^{op}(x)\xi^{op}=\xi^{op} x.\end{equation}
   \subsection{}\label{b3-} (A notation for further reference)
  Fix an integer $d\ge 1$. Consider   a 
   linear subspace $X\subset A$. 
  Let  $X^{\otimes d} $ denote the algebraic tensor product. Let $0\le r\le  d$.
   We define a linear mapping
  $$t_r: \ X^{\otimes d} \to H \otimes H^*$$
  by
  $$\forall x=a_1\otimes \cdots\otimes a_d\in  X^{\otimes d}\quad
  t_r(x)=a_1  \cdots  a_r \xi \otimes  \xi^{op} a_{r+1}  \cdots  a_d .$$
In the extreme cases $r=0$ or $r=d$, we mean
$$ 
  t_0(x)=   \xi \otimes \xi^{op} a_{1}  \cdots  a_d 
  \quad{and}\quad
  t_d(x)= a_1  \cdots  a_d \xi \otimes  \xi^{op}.$$
  This definition is extended to the whole of $X^{\otimes d} $ by linearity.
  \subsection{}\label{b3} (More notation)
  Assume now that   the  linear subspace $X\subset A$  
  is included in the direct sum of two subspaces $X_1,X_2\subset A$,
  so that $X\subset X_1+X_2$ and we have a linear embedding
  $J:\ X\to X_1\oplus X_2$.\\
  For further reference, we define for any  $1\le r\le  d$ a linear mapping
  $$s_r: \ X^{\otimes d} \to H \otimes H^*\otimes (X_1\oplus X_2)$$
  by setting
  $$\forall x=a_1\otimes \cdots\otimes a_d\in  X^{\otimes d}\quad
  s_r(x)=  a_1  \cdots  a_{r-1}  \xi \otimes \xi^{op} a_{r+1}  \cdots  a_d \otimes J(a_r).$$
  In the extreme cases $r=1$ or $r=d$, we mean
$$ 
  s_1(x)=   \xi \otimes  \xi^{op} a_{2}  \cdots  a_d  \otimes J(a_1)
  \quad{and}\quad
  s_d(x)=  a_1  \cdots  a_{d-1}  \xi \otimes  \xi^{op} \otimes J(a_d).$$
  This definition is extended to the whole of $X^{\otimes d} $ by linearity.

  \section{Background on  ultraproducts}

It will be convenient to use   ultraproducts,
but we only need the very basic 
and elementary facts that are recalled below.

 \subsection{}\label{ul}
Let $\U$ be a non trivial ultrafilter on $\N$.
Given a sequence $(X^{(k)})$ of Banach spaces,
their ultraproduct is usually denoted by
$$\prod_{k\in \N} X^{(k)}/{\U}.$$
We will more often denote it by $X^{\U}$   (see
e.g. \cite{He} for more background information).
The elements $x\in X^{\U}$ are equivalence classes
of bounded sequences $(x^{(k)})$ with $x^{(k)}\in X^{(k)}$ for all $k$.
By definition, two such sequences $(x^{(k)})$  , $(y^{(k)})$ are equivalent
if $\lim\nolimits_\U\|x^{(k)}-y^{(k)}\|=0$.
We will sometimes write $x=[x^{(k)}]_\U$ to denote that  $(x^{(k)})$
is a representative of $x$. Whenever this holds we have
$\|x\|_{X^\U}=\lim\nolimits_\U \|x^{(k)}\|$.
\subsection{}\label{wo2} Let $(X^{(k)}),(Y^{(k)})$ be sequences of Banach spaces (resp. unital $C^*$-algebras). Let $T  ^{(k)}:\ X^{(k)}\to Y^{(k)}$ be a bounded sequence of linear mappings (resp. unital $*$-homomorphisms)
then the mapping $T^\U:\ X^{\U}\to Y^{\U}$ defined whenever
$x=[x^{(k)}]_\U$ by $T^\U(x)=[T  ^{(k)}(x^{(k)}) ]_\U$ is
bounded (resp. a  unital $*$-homomorphism) with
$\|T^\U\|=\lim\nolimits_\U \|T  ^{(k)}\|$.

\subsection{}\label{wo}
As is well known, when all the spaces in $(X^{(k)})$ are Hilbert spaces
$X^{\U}$ is also a Hilbert space.\\
It may be worthwhile to remind the reader
that if $(Y^{(k)})$ is another family of  Hilbert spaces, we have a canonical isometric
embedding  
$$X^{\U}\otimes_2 Y^{\U}\subset \prod_{k\in \N}  X^{(k)}\otimes_2 Y^{(k)} /\U,$$ 
taking $ [x^{(k)}]_\U \otimes [y^{(k)}]_\U  $ to $[x^{(k)}\otimes y^{(k)}]_\U  $.
Of course this extends to an arbitrary finite number of factors.
Moreover, if $\sup_k \dim(X^{(k)})<\infty$, then this embedding is an isomorphism.

  \subsection{}\label{ulbis} Let $(H^{(k)})$ be a sequence of Hilbert spaces.
 We have then an isometric identification
 $${{H^\U}^{*}}=\prod\nolimits_{k\in \N} {H^{(k)}}^*/{\U}.$$
Let ${H^{(k)}}\otimes  {H^{(k)}}^*\subset B(H^{(k)})$ be the usual embedding
(taking $x\otimes f$ to the mapping $h\mapsto xf(h)$).
  Then,     
  for any sequence $t^{(k)}\in  {H^{(k)}}\otimes  {H^{(k)}}^*$
  with  ranks uniformly bounded by some number $N$, 
  associated, as in \ref{wo}, to some $t \in {H^\U}\otimes {H^\U}^*$ (of rank at most $N$) 
 we have
   \begin{equation}\label{x12}\lim\nolimits_{\U} \| t^{(k)}  \|_{B({H^{(k)}})}= \| t\|_{B(H^\U)}.\end{equation}
   More explicitly,
   let $t=\sum_1^N h(\alpha)\otimes \xi(\alpha)\in {H^\U}\otimes {H^\U}^*$.
 Assume $ h(\alpha)=[h^{(k)}(\alpha)]_\U$ and
 $ \xi(\alpha)=[\xi^{(k)}(\alpha)]_\U$. Let
 $t^{(k)}=\sum\nolimits_1^N h^{(k)}(\alpha)\otimes \xi^{(k)}(\alpha)\in {H^{(k)}}\otimes {H^{(k)}}^*$. Then \eqref{x12} holds.

 For the convenience of the reader, let us sketch  a quick
 and instructive verification
  of \eqref{x12}.
Let $E\subset {H^\U}$ and $F\subset {H^\U}^*$ be $N$-dimensional subspaces such that $t\in E\otimes F$. Let 
$(e_i)=(e^{(k)}_i)$ and $(f_j)=(f^{(k)}_j)$ be orthonormal bases  of $E$ and $F$.
We can then  write
  $t=\sum a_{ij} e_i\otimes f_j $
  and  $t^{(k)}=\sum a_{ij} e^{(k)}_i\otimes f^{(k)}_j $.
  By an elementary perturbation, we may assume that
  $(e^{(k)}_i)$ and $(f^{(k)}_j)$ are orthonormal
  in  ${H^{(k)}}$ and ${H^{(k)}}^*$ for all $k$ large enough.
  Then $\| t^{(k)}  \|_{B({H^{(k)}})}= \| t\|_{B(H^\U)}=\|[a_{ij}]\|_{M_N}$
  for all $k$ large enough.
  
\subsection{}\label{u} 
Let $({\cl H}^{(k)})$ be a sequence of Hilbert spaces.
Let $S^{(k)}\in B({\cl H}^{(k)})$ be a bounded sequence.
Let $S^{\U}\in B({\cl H}^{\U})$ be the associated operator.
By \ref{wo2}, we already know that
$\|S^{\U}\|=\lim\nolimits_\U \| S^{(k)} \|.$\\
More generally, let $N$ be a fixed integer and $H$ a Hilbert space. We denote by $M_N(B(H))$
the space of $N\times N$ matrices with entries in $B(H)$ with the usual  norm.\\
Let $[a_{ij}^{(k)}]\in M_N(B({\cl H}^{(k)}))$ be a bounded sequence.
Note that if $K^{(k)}={\cl H}^{(k)}\oplus\cdots\oplus {\cl H}^{(k)}$ ($N$-times),
there is a natural identification $K^{\U}={\cl H}^{\U}\oplus\cdots\oplus {\cl H}^{\U}$ ($N$-times).
 Then clearly
\begin{equation}\label{ida1}\|[a_{ij}^{\U}]\|_{M_N(B({\cl H}^{\U}))  }=\lim\nolimits_\U \|[a_{ij}^{(k)}]  \|_{M_N(B({\cl H}^{(k)}))}.\end{equation}
Consider now 
$C^*$-subalgebras $B^{(k)}\subset B({\cl H}^{(k)})$, and their
ultraproducts $B^\U\subset B({\cl H}^\U)$. Let
  $$s=\sum\nolimits_1^N h(\alpha)\otimes \xi(\alpha)\otimes \beta(\alpha) \in {H^\U}\otimes {H^\U}^*\otimes  B^\U.$$
  Assume $ h(\alpha)=[h^{(k)}(\alpha)]_\U$, 
 $ \xi(\alpha)=[\xi^{(k)}(\alpha)]_\U$
 and    $ \beta(\alpha)=[\beta^{(k)}(\alpha)]_\U$.
  Then let
  $$s^{(k)}=\sum\nolimits_1^N h^{(k)}(\alpha)\otimes \xi^{(k)}(\alpha)\otimes \beta^{(k)}(\alpha)
  \in  {H^{(k)}}\otimes  {H^{(k)}}^* \otimes B^{(k)}.$$ 
We claim that
\begin{equation}\label{ida2}\|s\|_{B({ H}^{\U})  \otimes_{\min} B^{\U}}= 
\lim\nolimits_{\cl U}    \| s^{(k)}  \|_{B({ H}^{(k)})  \otimes_{\min} B^{(k)} }.\end{equation}
Arguing as in
\ref{ulbis} we can find orthonormal systems
$(e^{(k)}_i)$ and $(f^{(k)}_j)$ of length $N$
in  ${H^{(k)}}$ and ${H^{(k)}}^*$
with respect to which we may write
$s^{(k)}=\sum_{ij} e^{(k)}_i\otimes f^{(k)}_j \otimes a^{(k)}_{ij} $.
We have then
$$ \| s^{(k)}  \|_{B({ H}^{(k)})  \otimes_{\min} B^{(k)} }=\|s^{(k)}\|_{B({ H}^{(k)})  \otimes_{\min} B({\cl H}^{(k)}) }
=\| [a^{(k)}_{ij}]  \|_{M_N(B({\cl H}^{(k)}))} ,$$
and similarly
$\|s\|_{B({ H}^{\U})  \otimes_{\min} B^{\U}}=\|s \|_{B({ H}^{\U})  \otimes_{\min} B({\cl H}^{\U}) }
=\| [a^{\U}_{ij}]  \|_{M_N(B({\cl H}^{\U}))} $. Now the claim follows
from \eqref{ida1}.
\subsection{}\label{e}
Given a sequence of states $\phi^{(k)}$
on a unital $*$-algebra ${\cl A}$, let $(\pi^{(k)},H^{(k)},\xi^{(k)})$ be the associated
GNS construction and let $A^{(k)}=\overline{\pi^{(k)}({\cl A})}\subset B(H^{(k)})$ be the associated 
$C^*$-algebra.
 Let $\pi^\U: \ {\cl A} \to B(H^\U)$
be the representation  defined for any $z=[z^{(k)}]_\U\in H^\U$ and $b\in {\cl A}$ by 
$$\pi^\U(b) ([z^{(k)}]_\U)= [\pi ^{(k)}(b)(z^{(k)})]_\U.$$
Obviously,
$$\|\pi^\U(b) \|=\lim\nolimits_\U\|\pi ^{(k)}(b)  \|.$$
Let   $\phi  =\lim\nolimits_\U \phi^{(k)}$ 
relative to    \emph{pointwise} convergence on ${\cl A}$, 
let $\pi:\ {\cl A}\to B(L_2(\phi))$ be the associated GNS representation and let 
$\xi=\xi_\phi$. Let also $\xi^\U=[\xi^{(k)}]_\U$. Then
   $$\|\pi^\U(b) \xi^\U\|^2=\lim\nolimits_\U \|\pi^{(k)}(b) \xi^{(k)}\|^2=\lim\nolimits_\U \phi^{(k)}(b^*b) =\phi(b^*b)=
\|\pi(b)\xi\|^2.$$
Similarly, using the identity ${{H^\U}^{*}}=\prod_{k\in \N} {H^{(k)}}^*/{\U}$ (see \ref{ulbis}),
   we have for any $b\in {\cl A}$
    $$\|{\pi}^{\U\, op}(b) {\xi}^{\U \,op}\|_{{H^\U}^{*}}^2=\lim\nolimits_\U \|{\pi}^{{(k)}\, op}(b) {\xi}^{{(k)} \, op}\|_{{H^{(k)}}^{*}}^2=\lim\nolimits_\U \phi^{(k)}(bb^*) =\phi(bb^*)=
\|{\pi}^{op}(b){\xi}^{op}\|^2.$$

It is natural to extend the notation \eqref{x22} by setting
\begin{equation}\label{x23}\pi^{(k)}(b) \xi^{(k)}= b \xi^{(k)} \text{ and }  {\pi}^{{(k)}\, op}(b) {\xi}^{{(k)}\, op} =
  {\xi}^{(k)} b,\end{equation}
\begin{equation}\label{x24}\pi^{\U}(b) \xi^{\U}= b \xi^{{\U}} \text{ and }  {\pi}^{{{\U}}\, op}(b) {\xi}^{{{\U}}\, op} =
  {\xi}^{{\U}} b,\end{equation}

\subsection{}\label{x1} Therefore, the correspondence
$b\xi\mapsto b \xi^\U$ extends to an isometric isomorphism
from $L_2(\phi)$ onto the subspace $K^\U\subset H^\U$ that is the closure
of $\{b \xi^\U \mid b \in {\cl A}\}$. More precisely, the restriction of $\pi^\U$ to $K^\U$, i.e. $b\mapsto 
\pi^\U(b)_{|K^\U}\in B(K^\U)$,
  is unitarily equivalent to  the representation $\pi=\pi_\phi $.\\ 
  Similarly, $ {\xi}^{op} b  \mapsto   {\xi}^{\U \,op} b $ extends to an isometric isomorphism
from $L_2(\phi)^*$ onto a subspace of ${{H^\U}^{*}}$,
which can be identified isometrically, via $y\mapsto y_{|K^\U}$ with ${{K^\U}^{*}}$.

  \subsection{}\label{x10} Now let us assume moreover  that  $\phi  =\lim\nolimits_\U \phi^{(k)}$ \emph{strongly}. This means  
(see below)
that $\|\pi(b)\|=\lim\nolimits_\U \|\pi^{(k)}(b)\|=\|\pi^\U(b) \|$ for any $b\in {\cl A}$.
Then the mapping $\pi(b)\mapsto \pi^\U(b)$
defines an isometric (and automatically completely isometric) 
embedding of $C^*$-algebras
$$\psi:\ A=\overline{\pi({\cl A})}\to B(H^\U).$$

   \subsection{}\label{rem1} 
   Let $(H_i,\xi_i)_{i\in I}$ be a family of Hilbert spaces, each equipped with a distinguished unit vector. 
    Let $(H,\xi)=\ast_{i\in I}(H _i,\xi _i)$ be their free product
    in the sense of \cite{VDN}. This is defined
    as
    $$(H,\xi)=(H_0 \oplus \oplus_{d\ge 1} H_d, \xi)$$
    where $H_0=\CC$ with unit vector $\xi=1_{\CC}$ (viewed as sitting in $H$) and
    $$H_d=\oplus_{i(1)\not=\cdots\not=i(d)} [H_{i(1)}\ominus \CC \xi_{i(1)}]\otimes_2\cdots\otimes_2  [H_{i(d)}\ominus \CC \xi_{i(d)}] .$$
    It is natural to wonder whether this free product commutes with utraproducts. Let $(H^{(k)}_i,\xi^{(k)}_i)_{i\in I}$ be a sequence of such families
   (indexed by   $k\in \N$).
   Let
    $(H^{(k)},\xi^{(k)})=\ast_{i\in I}(H^{(k)}_i,\xi^{(k)}_i)$ . Going back to the definition of the free product, a moment of thought (recall \ref{wo})
 shows that  we have a canonical isometric 
 embedding
 \begin{equation}\label{10}  {\chi}: *_{i\in I}  {  H}^{\U}_i \subset  { H}^{\U} \end{equation}
 that respects the distinguished vectors.\\
Assuming $I=\{1,2\}$, the mapping ${\chi}$ can be described like this:
First we have ${\chi}(\xi)=  \xi^{\U}$, then
whenever we consider an element 
$x_j$    in  ${ H}^{\U}_1 \cap \{ \xi^{\U}_1 \}^\perp $ 
(resp. ${ H}^{\U}_2\cap \{ \xi^{\U}_2 \}^\perp $) we can choose 
representatives $(x^{(k)}_j)$ of $x_j$ with $x^{(k)}_j$    in  ${ H}^{(k)}_1\cap \{ \xi^{(k)}_1 \}^\perp $ (resp. ${ H}^{(k)}_2\cap \{ \xi^{(k)}_2 \}^\perp $), so that
given an element $x=x_1\otimes\cdots\otimes x_d$
of degree $d$ in  $*_{i\in I}  { H}^{\U}_i$, with  alternating factors in  ${ H}^{(k)}_1\cap \{ \xi^{(k)}_1 \}^\perp $ and ${ H}^{(k)}_2\cap \{ \xi^{(k)}_2 \}^\perp $,   we then define ${\chi}(x)$ as the 
element of   ${ H}^{\U} $ admitting as representative the sequence $(x^{(k)})$ with
$x^{(k)} =x^{(k)}_1\otimes\cdots\otimes x^{(k)}_d$. However, it is easy to
see that this embedding $\chi$ is not surjective.
   
 \section{Main result}\label{sec3}
  We now turn to a more formal description of our main result.\\
  A more abstract (but equivalent) version of the  statement
   in the abstract
can be given in terms of convergence of states. We use the notation in \ref{b}.

\begin{dfn} Let $\phi$ (resp. $\phi^{(k)}$,  $(k\in \N)$) be   states on a unital $*$-algebra ${\cl A}$ (assumed included in some $C^*$-algebra)
with 
 associated GNS Hilbert spaces denoted by $L_2(\phi)$ (resp.  $L_2(\phi^{(k)})$).
Let $\pi $ (resp. $\pi^{(k)}$) be the associated GNS representations of ${\cl A}$ on
these Hilbert spaces.
We say that  $\phi^{(k)}$ tends to  $\phi$ strongly and we write
$\phi^{(k)} \stackrel{s}{\to} \phi$ if $\phi^{(k)}$ tends to  $\phi $ pointwise
on ${\cl A}$ and moreover if
$\|\pi^{(k)}(c)\|\to \|\pi(c)\|$ for any $c$ in ${\cl A}$. 
\end{dfn}

In \cite{VDN} the notion of free product of a family of states 
is defined. It can be described as follows. Consider a family of states
$\{\phi_i\mid i\in I\}$ with  
GNS Hilbert space $H_i=L_2(\phi_i)$, GNS representation
$\pi_i:\ {\cl A}_i \to B(L_2(\phi_i)   )$
and distinguished unit vector $\xi_i$. Let $A_i\subset B(L_2(\phi_i))$ be the associated $C^*$-algebra.
We denote by  $\hat \pi_i:\ A_i\to B(L_2(\phi_i)   )$ the inclusion map. 
Let ${\cl A}=*_{i\in I} {\cl A}_i$ be the (algebraic)
free product of unital $*$-algebras. Following Voiculescu (see \cite{VDN})
one defines a Hilbert space free product $(H,\xi)= *_{i\in I}( H_i,\xi_i)$
and a representation $\pi$ of ${\cl A}$
acting on $(H,\xi)$. Let
$\hat \phi_i$ (resp. $\hat \phi$) be the vector state on $A_i$
(resp. $\overline{\pi({\cl A})}$) associated to $\xi_i$ (resp. $\xi$).
The unital $C^*$-subalgebra  $A=\overline{\pi({\cl A})}\subset B(H)$, equipped with
$\hat \phi$, 
 is called the  reduced free product of  $(A_i, \hat \phi_i )_{i\in I}.$
 Note that, by \cite{Dy},   $\hat \phi$ is faithful on $A$
 if each $\hat \phi_i$ is  faithful on $A_i$.
 
We will denote
  by $\phi=*_{i\in I} \phi_i$ the vector state on ${\cl A}$ defined
  by $\phi(b)=\langle \xi, \pi(b)\xi\rangle$.  We call it
  the free product of the states $\{\phi_i\mid i\in I\}$.
  Then we can reformulate the main result like this:
  
 \begin{thm}\label{t3} Let ${\cl A}_i$ ($i\in I$) be a family of unital $*$-algebras.
 Let $\{\phi^{(k)}_i\mid i\in I\}$  ($k\in \N$) be a sequence of families of states,
 each $\phi^{(k)}_i$ being a state
 on ${\cl A}_i$.  
 Assume that  
 we have states $\phi_i$ on ${\cl A}_i$ such that, 
 for each $i\in I$, when $k\to \infty$ we have  
 $$ \phi^{(k)}_i \stackrel{s}\to \phi_i.$$
 Then $$*_{i\in I}\phi^{(k)}_i \stackrel{s}\to *_{i\in I}\phi_i .$$
 
 \end{thm}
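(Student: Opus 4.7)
The plan is to carry out the outline of \S 1 literally. Strong convergence of $*_i \phi_i^{(k)}$ to $*_i \phi_i$ decomposes into (a) pointwise (moment) convergence on $B = *_i C_i$ and (b) norm convergence $\|\pi^{(k)}(b)\| \to \|\pi(b)\|$ for every $b \in B$, where $\pi^{(k)}, \pi$ are the GNS representations of $*_i \phi_i^{(k)}$ and $*_i \phi_i$. Part (a) is automatic: by the very definition of the free product of states, $(*_i \phi_i^{(k)})(b)$ is a universal polynomial in finitely many values $\phi_i^{(k)}(c_{i,\alpha})$ with $c_{i,\alpha} \in C_i$, so pointwise convergence is inherited coordinatewise. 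Moreover, (a) combined with \S\ref{e} already yields an isometric embedding of $\pi(B)$ into $B(H^\U)$, giving the easy half of (b):
$$\|\pi(b)\| \leq \|\pi^\U(b)\| = \lim\nolimits_\U \|\pi^{(k)}(b)\|.$$

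The reverse inequality is where Ricard--Xu enters. Fix $b \in B$ of joint degree $d$ and $\vp > 0$, and choose $m = m(d,\vp)$ so that $[\max\{8md,(4md+1)^2\}]^{1/2m} < 1+\vp$. Setting $Q = (b^*b)^m$, a polynomial of joint degree at most $2md$, and applying the Ricard--Xu inequality first to $(A^{(k)},\hat\phi^{(k)})$ and then to $(A,\hat\phi)$ gives
$$\|\pi^{(k)}(b)\|^{2m} = \|\pi^{(k)}(Q)\| \leq (4md+1)^2 \, kh(\pi^{(k)}(Q)), \qquad kh(\pi(Q)) \leq 8md \, \|\pi(b)\|^{2m}.$$
Hence, as soon as \eqref{00} is established, i.e.
$$\lim\nolimits_\U kh(\pi^{(k)}(Q)) = kh(\pi(Q)),$$
the two inequalities combine to give $\lim\nolimits_\U \|\pi^{(k)}(b)\| \leq (1+\vp)^2 \|\pi(b)\|$, and letting $\vp \to 0$ finishes the proof.

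All the substantive work is thus in verifying \eqref{00}, which is the main obstacle. Unpacking the definition of $kh$ from \cite{RX}, one sees that $kh$ of a polynomial of bounded degree in a reduced free product is a finite expression, indexed by the alternating colour patterns of length $\leq 2md$ (a set fixed once $m$ and $d$ are), built from finitely many specific elements $\pi_i^{(k)}(c_{i,\alpha})$ (and their $\phi_i^{(k)}$-centered parts), and assembled in a bounded-rank tensor construction over the GNS Hilbert spaces $H_i^{(k)}$. Since each piece involves only finitely many fixed elements and has tensor rank uniformly bounded in $k$, the ultraproduct tools of \S\ref{wo}--\S\ref{u} apply: by \S\ref{wo2} the finite-rank tensors transfer isometrically into the ultraproduct, by \S\ref{u} operator norms in $M_n(B(H^{(k)}))$ pass to the limit, and by \S\ref{e} the hypothesis $\phi_i^{(k)} \stackrel{s}\to \phi_i$ produces isometric embeddings $A_i \hookrightarrow B(H_i^\U)$ that intertwine the representations. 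Matching these isometric identifications pattern by pattern then yields \eqref{00}. The delicate point is confirming that each Ricard--Xu piece really does have the bounded-rank tensor form required by \S\ref{wo2}; this is precisely where the hypothesis of strong, rather than merely pointwise, convergence of each $\phi_i^{(k)}$ is used non-trivially.
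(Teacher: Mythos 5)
Your strategy is the paper's own outline from \S 1 carried out with the ultraproduct toolkit of \S 2, and the reduction is sound: pointwise convergence is automatic, the inequality $\|\pi(b)\|\le\lim_\U\|\pi^{(k)}(b)\|$ follows from moment convergence alone (though note it comes from a \emph{contractive}, not isometric, identification --- $\pi$ is unitarily equivalent to the restriction of $\pi^\U$ to the invariant subspace $K^\U$; if the embedding were isometric the theorem would be trivial), and the power trick correctly reduces everything to \eqref{00}. But you say yourself that ``all the substantive work is in verifying \eqref{00}'', and that verification is exactly where your write-up stops being a proof. The first missing point: $kh$ is defined through the decomposition of $B=*_iC_i$ into homogeneous components, and that decomposition \emph{depends on the states used for centering}. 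An element $c\in C_i$ with $\phi_i(c)=0$ need not satisfy $\phi_i^{(k)}(c)=0$, so the length-$d$ component of $Q$ relative to $*_i\phi_i^{(k)}$ lives in a different subspace of $B$ than the one relative to $*_i\phi_i$, and $kh(\pi^{(k)}(Q))$ is therefore not built from ``the same patterns'' as $kh(\pi(Q))$. Your pattern-by-pattern matching does not literally make sense until the two decompositions are reconciled; the paper does this with the recentering maps $v^{(k)}$, the intertwining identity \eqref{as56} and the estimate \eqref{as55}, which reduce the problem to a fixed homogeneous $b=P_d(b)$. Some such step is unavoidable.

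The second, more serious, problem is your closing diagnosis. The uniform bound on tensor ranks is \emph{not} delicate and has nothing to do with strong convergence: $Q$ is a fixed finite sum of monomials, so each $t_r$ and $s_r$ is a sum over a fixed finite index set independent of $k$, and \S\ref{wo2} applies for free. The place where strong (rather than pointwise) convergence of the factors is genuinely needed is the $s_r$ terms: their third tensor factor is the operator $J_{i(\alpha,r)}(x_r(\alpha))\in A_1\oplus A_2$ measured in the minimal tensor norm, and to identify $\lim_\U\|s_r(\cdot)\|$ with $\|s_r(\cdot)\|$ one needs the maps $\pi_i(c)\mapsto\pi_i^\U(c)$ to be isometric ($*$-homomorphic) embeddings $A_i\to B(H_i^\U)$, which is precisely the statement $\|\pi_i(c)\|=\lim_\U\|\pi_i^{(k)}(c)\|$, i.e.\ strong convergence of $\phi_i^{(k)}$. (The $t_r$ terms are purely Hilbert-space quantities, determined by Gram matrices, hence by moments.) Since the theorem is false under mere pointwise convergence of the factors --- strong convergence of the free product restricts to strong convergence of each factor --- any correct proof must exploit the hypothesis at exactly this point, and a write-up that locates the difficulty elsewhere has not actually checked the step on which everything rests.
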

  \subsection{}\label{le0-1}  
  The analogue of the preceding statement for pointwise convergence
  of states is obvious from the definition of  the  reduced free product
  in \cite{VDN}. 

\subsection{}\label{le0} Let $A_i$ be associated to $({\cl A}_i,\phi_i)$ by the GNS
construction as above.
We view each $A_i$ as a subalgebra of  the  reduced free product $A= {*}_{i\in I} A_i$.
Let $$\stackrel{\circ}{A_i}=\{x\in A_i\mid \phi_i(x)=0\}.$$
By a monomial of degree $d$
 we mean a   product of the form
 $x_1\cdots x_d$
 with $x_j\in \stackrel{\circ}{A_{i_j}}$   such that $i_1\not=i_2\not= \cdots\not= i_d$.
By a homogeneous
element of degree $d$ in $A= {*}_{i\in I} A_i$ we mean a finite sum of monomials
 of degree $d$. 
 An element is called of degree $\le d$ if it is
 a sum of homogeneous
elements each of degree $\le d$.
 Note that the elements of finite degree are dense in  $A$. 
 
Let us denote by $W_{d}$ (resp. $W_{\le d}$) the space of homogeneous elements of degree $d$ (resp. ${\le d}$) in the preceding sense. We also set $W_0=\CC 1$ and denote by $\overline{W_{d}}$
the closure of $W_{d}$  in $A$.  
  Then the Ricard-Xu inequality we will use  is this
  (note that, by our convention in \ref{b}, the assumption in \cite{RX} that all the GNS constructions are  faithful
  is here automatic):   
There are constants $c'>0$ and $\beta>0$
such that \begin{equation}\label{rx0}
\forall d\ \forall x\in W_{d}  \quad  (c' d^\beta)^{-1}kh(x)\le  \|x\| \le c' d^\beta kh(x),\end
 {equation}
 where we set
\begin{equation}\label{rx0+} kh(x)=  \max \{ \max_{0\le r\le d} \|t_r(x)\|,  \max_{1\le r\le d}  \|s_r(x)\|  \},\end
 {equation} and where $t_r(x),s_r(x)$ are defined as follows:
We assume $I=\{1,2\}$ for notational simplicity.
Let $X=   \stackrel{\circ}{A_1}+\stackrel{\circ}{A_2} \subset A $. We have obviously an embedding denoted by $x\mapsto [x]$ of
 $W_{d}$ into $X^{\otimes d} $. So following \ref{b2} we may set
 for any $x\in W_{d}$
 $$t_r(x)=t_r([x]).$$
We will  identify an element $\sum x_j\otimes y_j\in  H\otimes H^*$ with the linear
 map $T\in B(H)$ defined by $T(z)=\sum x_j \otimes y_j(z)$. In this way
 we will view
 $t_r(x)$ as an element of $B(H )$, and we denote by $\|t_r(x)\|$ its norm.
   
  Let  $X = \stackrel{\circ}{A_1}+\stackrel{\circ}{A_2}$, $X_i=A_i$   and 
 $J:\     \stackrel{\circ}{A_1}+\stackrel{\circ}{A_2} \to  {A_1}\oplus {A_2}$  
   be the canonical  embedding. 
   Following \ref{b3}, we set\\
  $$s_r(x)= s_r([x])\in H \otimes H^*\otimes ({A_1}\oplus{A_2}).$$
  We then denote by  $\|s_r(x)\|$ its norm in the minimal tensor
  product $ B( H) \otimes_{\min} [  A_1  \oplus  A_2    ]$.
  Equivalently, if we are given
  isometric representations $\psi_j:\ A_j \to B(H_j)$ ($j=1,2$) this is the maximum of two norms,
  one in  $ B( H) \otimes_{\min}   A_1$ 
  (induced by  $B(H \otimes H_1  )$) and one in 
  $ B(H  ) \otimes_{\min}   A_2$  (induced by  $B(H \otimes H_2)$).

 \subsection{(On recentering)}\label{cor}  For any $i\in I$, let $  \stackrel{\circ}{{\cl A}_i}=\{ x\in {\cl A}_i\mid \phi_i(x)=0\}$.
The elements of $  \stackrel{\circ}{{\cl A}_i}$ are sometimes called ``centered" (with respect to $\phi_i$).
 Let ${\cl A}_0=\CC 1$. 
 By elementary (free) algebra, one can show that
 the algebraic free product ${\cl A}$ is linearly  isomorphic to the direct sum
  \begin{equation}\label{72}{\cl A}_0\oplus \oplus_{d\ge1,  i_1\not=\cdots\not=i_d} {\cl A}(i_1,\cdots,i_d)\end{equation}
where the 
 subspaces ${\cl A}(i_1,\cdots,i_d)$ ($d\ge1$, $i_i\not=i_2\not=\cdots$) are formed
 of all products
 of the form
 \begin{equation}\label{71}x_{i_1} \cdots x_{i_d}
   \quad {\rm with}  \quad  x_{i_j} \in 
 \stackrel{\circ}{{\cl A}_{i_j}}\quad    \forall 1\le j\le d . 
 \end{equation}

 Recall that, for each $k$, we are given a state $\phi_i^{(k)}$ on ${\cl A}_i$. We will
 denote by $v_i^{(k)}: {\cl A}_i \to {\cl A}_i$ the linear mapping that transforms
 centering with respect to $\phi$ into centering with respect
 to $\phi_i^{(k)}$. More precisely, $v_i^{(k)}$ is 
 defined by  $v_i^{(k)}(1)=1$ and 
 $\forall a\in  \stackrel{\circ}{{\cl A}_{i}}\ v_i^{(k)}(a)= a - \phi_i^{(k)}(a)1$ . 
 Using the above  direct sum decomposition of ${\cl A}$, 
 and viewing ${\cl A}_i \subset {\cl A}$ we can extend the mappings 
 $v_i^{(k)}$ to a single mapping on ${\cl A}$. More precisely,
 using the freeness of the product, there is a unique linear map
 $v^{(k)}: {\cl A} \to {\cl A}$  that coincides with $v_i^{(k)}$ on  ${\cl A}_i$
 for each $i\in I$ and is such that for any element of the form
 \eqref{71} we have
  \begin{equation}\label{x3}v^{(k)} (x_{i_1} \cdots x_{i_d})=v^{(k)}_{i_1}(x_{i_1})  \cdots v^{(k)}_{i_d}(x_{i_d})
 . \end{equation}
 We note that 
 if we equip ${\cl A}$ with the maximal $C^*$-norm then,
 since $\phi_i^{(k)}\to\phi_i$, we clearly have
  \begin{equation}\label{as55}  \forall b\in {\cl A}\quad \| v^{(k)}(b)- b\|\to 0.\end{equation}
 Let us denote by  ${\cl P}_d$  the linear projection (relative to \eqref{72})
 from ${\cl A}$ to the subspace $\cl W_d\subset {\cl A} $
 defined by 
 $$\cl W_d=\oplus_{ i_1\not=\cdots\not=i_d} {\cl A}(i_1,\cdots,i_d) .$$
 Fix $k$. Again let $\cl W_0=\CC1$ and $\cl W_{\le d}=\cl W_0+\cdots +\cl W_d $.\\
  Suppose now that we replace $\phi$ by $\phi_i^{(k)}$ so that 
 we have a direct sum decomposition as above but now associated to $\phi_i^{(k)}$.
 This leads to subspaces $\cl W^{(k)}_d\subset {\cl A} $
 defined exactly like $\cl W_d$ but with respect to $\phi_i^{(k)}$. Let
  ${\cl P}^{(k)}_d$ denote   the linear projection
 from ${\cl A}$ to the subspace $\cl W^{(k)}_d\subset {\cl A} $ in the said direct sum decomposition
 relative to $\phi_i^{(k)}$.  It is easy to check that we have for any $k$
   \begin{equation}\label{as56} 
    v^{(k)} {\cl P}_d= {\cl P}^{(k)}_d v^{(k)}
 .\end{equation}

     \subsection{}\label{le00} By \cite[Cor. 3.3]{RX} ${\cl P}_d$ extends by density to  a  completely  bounded projection
     from $ {*}_{i\in I} A_i $ to $ \overline{W_d}$, that we will still denote
     abusively by $\cl P_d $ satisfying:
 $$ \|{\cl P}_d\|_{cb}\le \max\{1,4d\}.$$
Obviously  this implies  
 \begin{equation}\label{as4} 
 (\max\{1,4d\})^{-1} \max_{0\le d\le D}\{\|{\cl P}_d(x)\|\}\le \|x\|\le (d+1)\max_{0\le d\le D}\{\|{\cl P}_d(x)\|\}.
  \end{equation}
  It will be convenient for us to extend the above definition of $kh$ as follows:
  for any $D$ and any $x \in W_{\le D}$ we define
   \begin{equation}\label{kh} kh(x)=\sup_{0\le d\le D} kh({\cl P}_d(x)).\end{equation}
  
  Combining \eqref{as4} with \eqref{rx0} we now obtain that
  there are constants $c>0$ and $\alpha>0$
such that \begin{equation}\label{rx}
\forall d\ \forall x\in W_{\le d}  \quad  (c d^\alpha)^{-1}kh(x)\le  \|x\| \le c d^\alpha kh(x).
\end{equation}

\subsection{}\label{f}
Returning to the situation of  Theorem \ref{t3},
let $H^{(k)}_i=L_2(\phi_i^{(k)})$ with distinguished vector $\xi_i^{(k)}$,
GNS representation $\pi^{(k)}_i: {\cl A}_i\to A^{(k)}_i \subset B(H^{(k)}_i)$ with $A^{(k)}_i=\overline{\pi^{(k)}_i( {\cl A}_i)}$. 
We define $A^{(k)}=*_{i\in I} A^{(k)}_i$,  $H^{(k)}=*_{i\in I} H^{(k)}_i$ and let $\pi^{(k)}:\ {\cl A} \to *_{i\in I} A^{(k)}_i\subset B(H^{(k)})$ be the corresponding representation. 
Let
$H^{\U}$ (resp. $H^\U_i$) denote the ultraproduct of $(H^{(k)})$  (resp. $(H^{(k)}_i)$).\\
 We will use \ref{e} when $\phi=*_{i\in I}\phi_i$
and $\phi^{(k)}=*_{i\in I}\phi^{(k)}_i$.
We denote by 
$\pi:\ {\cl A}=*_{i\in I}{\cl A}_i \to B(H)$  the GNS representation relative to $\phi$. 
We have  natural identifications
$$(L_2(\phi),\xi_\phi)=*_{i\in I}(H_i,\xi_i)\quad{\rm and }
\quad (L_2(\phi^{(k)}),\xi_{\phi^{(k)}})=*_{i\in I}(H^{(k)}_i,\xi^{(k)}_i).$$
If we assume that $\phi_i^{(k)}\to \phi_i$ pointwise, then  
$\phi^{(k)}\to \phi$ pointwise on $\cl A$ and of course $\phi=\phi^\U$ on $\cl A$. Therefore
 the correspondence $ b \xi\mapsto  b  \xi^\U=[  b  \xi^{(k)}]_\U$
 is isometric  from $H$ to $H^\U$ (see \ref{x1}). 
Similarly, 
the correspondence $  \xi^{op}b \mapsto   \xi^\U b=[  \xi^{(k)} b]_\U$
  is isometric  from  from $H^*$ to ${H^\U}^*$ (see \ref{x1}).
  We will denote  respectively by $V:\ H \to H^\U$ and
   $W:\ H^* \to {H^\U}^*$   these \emph{isometric  embeddings}, so that   we have
 for any $b\in \cl A$  
  \begin{equation}\label{as10}V(b \xi )=[b\xi^{(k)} ]_\U \in {H^\U}\quad{\rm and}\quad
W(\xi b)=[\xi^{(k)} b ]_\U\in {H^\U}^*.\end{equation}
Assume now that $\phi_i^{(k)}\stackrel {s}{\to} \phi_i $.
Then  (see \ref{x10}) we also have an \emph{isometric  embedding}
$$\psi_i    :\ A_i\to A_i^\U\subset B(H_i^\U)$$
such that $\psi_i(\pi_i(b))=\pi_i^\U(b)$ for any $b\in {\cl A}_i$.

 \begin{proof}[Proof of Theorem \ref{t3}]
 Let $\phi= \ast_{i\in I}\phi_i $ and $\phi^{(k)}=\ast_{i\in I}\phi^{(k)}_i$. 
 Recall ${\cl A}=*_{i\in I} {\cl A}_i$ (algebraic free product). Note that ${\cl A}=\cup_K \cl W_{\le K}$.
 The pointwise convergence on ${\cl A}$ of 
 $\phi^{(k)}$ to $\phi$   is obvious by definition of the   free product of states.
 To show the strong convergence it suffices to show
 that $\lim\nolimits_\U \|\pi^{(k)}(b)\|=\|\pi(b)\|$ for any $b\in {\cl A}$ and any 
 non trivial ultrafilter $\U$ on $\N$.

 The main point is that, by the Ricard-Xu inequality, 
there are   constants $c>0$ and $\alpha>0$
such that  
 \begin{equation}\label{5}\forall b\in \cl W_{\le d}  \quad  
 (c d^\alpha)^{-1}\lim\nolimits_\U\|\pi^{(k)}(b)\| \le  \|\pi(b)\| \le c d^\alpha\lim\nolimits_\U\|\pi^{(k)}(b)\|.\end
 {equation}
If we accept this result, the proof is immediate:
we just note that   $(b^*b)^m$ is of degree at most $2md$,
therefore 
$(c (2md)^\alpha)^{-1} \lim\nolimits_\U\|\pi^{(k)}((b^*b)^m)\|\le  \|\pi((b^*b)^m)\|=\|\pi(b)\|^{2m} \le c (2md)^\alpha \lim\nolimits_\U\|\pi^{(k)}((b^*b)^m)\|$ and $\|\pi^{(k)}((b^*b)^m)\|=\|\pi^{(k)}( b)\|^{2m}$. 
So we find
$$(c (2md)^\alpha)^{-1/2m} \lim\nolimits_\U\|\pi^{(k)}(b)\|\le   \|\pi(b)\| \le (c (2md)^\alpha)^{1/2m}\lim\nolimits_\U\|\pi^{(k)}(b)\|$$
and letting $m\to \infty$ yields the equality $\lim\nolimits_\U\|\pi^{(k)}(b)\|=\|\pi (b)\|$.

We now turn to the proof of \eqref{5}.
By  the Ricard-Xu inequality \eqref{rx}, we have 
\begin{equation}\label{6}\forall K\ge1, \forall b\in \cl W_{\le K}  \quad  (c d^\alpha)^{-1}kh(\pi(b))\le  \|\pi(b)\| \le c d^\alpha kh(\pi(b)).\end
 {equation}
and 
 $$(c d^\alpha)^{-1}\lim\nolimits_\U kh( \pi^{(k)}(b) )\le  \lim\nolimits_\U\|\pi^{(k)}(b)\| \le c d^\alpha \lim\nolimits_\U kh(\pi^{(k)}(b)  ).$$
 Thus to conclude, it suffices to show that $\lim\nolimits_\U kh(\pi^{(k)}(b))=kh(\pi(b))$ for any $b\in \cl W_{\le K} $ and any $K\ge 1$.
 
 Let $b=\sum_0^K b_d$
 and $b=\sum_0^K b^{(k)}_d$ be the decomposition of ${\cl W}_{\le K} $
 into its homogeneous parts relative respectively to $\phi$
 and $\phi^{(k)}$.
 More precisely, $b_d={\cl P}_d(b)$ and $b^{(k)}_d={\cl P}^{(k)}_d(b)$.
  By \eqref{as56} we have
 $$v^{(k)} b_d= {\cl P}^{(k)}_d( v^{(k)} b)$$
 and hence by \eqref{as55} 
  \begin{equation}\label{x4}\| v^{(k)} b_d- b^{(k)}_d\|\to 0 \end{equation}
 with respect to the maximal $C^*$-norm on ${\cl A}$.

 Let us denote by  $t^{(k)}_r,s^{(k)}_r$   
 the mappings $t_r,s_r$ relative to the free product $\phi^{(k)}$.
 
 By \eqref{rx0+} and \eqref{kh} to conclude it obviously suffices to show
 $$\lim_{k\to \infty} \|t^{(k)}_r (b_d^{(k)}) \|=  \|t_r (b_d) \|
 \ {\rm and} \ \lim_{k\to \infty} \|s^{(k)}_r (b_d^{(k)}) \|=  \|s_r (b_d) \|.$$
 By \eqref{x4}, it actually suffices to show
\begin{equation}\label{x5}\lim_{k\to \infty} \|t^{(k)}_r (v^{(k)} b_d) \|=  \|t_r (b_d) \|
 \ {\rm and} \ \lim_{k\to \infty} \|s^{(k)}_r (v^{(k)} b_d) \|=  \|s_r (b_d) \|.\end{equation}

Fix $d$. We may assume
 that $b_d$  is a finite sum of the form 
 $ b_d =\sum^N_{\alpha=1} b(\alpha)$ with $b(\alpha) $ of the form
   $ b(\alpha)= x_1(\alpha)\cdots x_d(\alpha)$
 where  $x_1(\alpha)\in \stackrel{\circ}{{\cl A}_{i_1}} ,\cdots ,x_d(\alpha)\in \stackrel{\circ}{{\cl A}_{i_d}}$ and    $i_1\not=i_2\not= \cdots\not= i_d $.  
 
We remind the reader that   $\xi$ denotes the distinguished unit vector in $H=*_{i\in I}  H_i$, and \\
 $$t_r(b_d)=\sum\nolimits_\alpha x_1(\alpha)\cdots x_r (\alpha) \xi  \otimes \xi  x_{r +1}(\alpha) \cdots  x_d(\alpha)\in H\otimes H^*$$
 is viewed as an element of $B(H)$. Moreover,  in the   case
 $I=\{1,2\}$,
 $s_r(b_d)\in B(H) \otimes [  A_1 \oplus  A_2   ]$.\\ 
 By  \eqref{x3}, we have 
 \begin{equation}\label{x6} v^{(k)}(b_d)=\sum\nolimits_\alpha v^{(k)}(x_1(\alpha)\cdots x_d(\alpha))
 =v^{(k)}(x_1(\alpha))\cdots v^{(k)}(x_d(\alpha))\end{equation}
 and also for any $1\le j\le d$ we have $v^{(k)}(x_j(\alpha)) =x_j(\alpha) - \phi^{(k)}(x_j(\alpha)) 1$ and hence
 \begin{equation}\label{x7}\| v^{(k)}(x_j(\alpha)) - x_j(\alpha)\|\to 0 \end{equation}
 where the norm is (say) the maximal $C^*$-norm on ${\cl A}$.

 Recall   that $H^\U$
 denotes the Hilbert space ultraproduct of the free products defined by $(H^{(k)},\xi^{(k)} )=
 *_{i\in I}( H^{(k)}_i,\xi^{(k)}_i)$. 
We should compare $t_r(b_d)$ with $t^{(k)}_r(v^{(k)} b_d)\in  H^{(k)}\otimes {H^{(k)}}^*.$
By \eqref{x6} and \eqref{x7}  we have
 \begin{equation}\label{x8}\| t^{(k)}_r(v^{(k)} b_d)-\sum\nolimits_\alpha x_1(\alpha)\cdots x_{r}(\alpha) \xi^{(k)}  \otimes \xi^{(k)}  x_{r +1}(\alpha) \cdots  x_d(\alpha)\|_{  B(H^{(k)}) } 
 \to 0.\end{equation}
 Let
   $$ T_r^{(k)} =\sum\nolimits_\alpha x_1(\alpha)\cdots x_{r}(\alpha) \xi^{(k)}  \otimes \xi^{(k)}  x_{r +1}(\alpha) \cdots  x_d(\alpha),$$
      $$ T_r  =\sum\nolimits_\alpha [x_1(\alpha)\cdots x_{r}(\alpha) \xi^{(k)}  ]_\U \otimes [\xi^{(k)}  x_{r +1}(\alpha) \cdots  x_d(\alpha)]_\U.$$
   By  \eqref{x12} we have
  \begin{equation}\label{9}\|T_r\|_{  B(H^\U) }=\lim\nolimits_\U\|T_r^{(k)}  \|_{  B(H^{(k)}) }.\end{equation}
Consider now (see \ref{f}) the \emph{isometries}  $V:\ H \to H^\U$ and 
$W:\ H^* \to {H^\U}^*$.
  We have then,   by \eqref{as10}
  $$(V\otimes W)(t_r(b_d))=T_r$$
  from which  $\|T_r\|_{  B(H^\U) }=\|t_r(b_d)\|_{  B(H) }$ follows. Now by \eqref{x8} and \eqref{9}
  we conclude that $$\|t_r(b_d)\|_{  B(H) }= \lim\nolimits_\U \| t^{(k)}_r(v^{(k)} b_d)  \|_{  B(H^{(k)}) } .$$ So far we used only the pointwise convergence.

Similarly, assuming $I=\{1,2\}$ for simplicity, 
we  should compare  $s_r(b_d)$ with  $s^{(k)}_r(v^{(k)} b_d)$.
We set
   $$ S_r^{(k)} =\sum\nolimits_\alpha x_1(\alpha)\cdots x_{r-1}(\alpha) \xi^{(k)}  \otimes \xi^{(k)}  x_{r +1}(\alpha) \cdots  x_d(\alpha) \otimes J^{(k)}\pi^{(k)}v^{(k)}( x_r(\alpha)),$$
   
    $$ S_r  =\sum\nolimits_\alpha [x_1(\alpha)\cdots x_{r-1}(\alpha) \xi^{(k)}]_\U  \otimes [\xi^{(k)}  x_{r +1}(\alpha) \cdots  x_d(\alpha)]_\U  \otimes [J^{(k)}\pi^{(k)}v^{(k)}( x_r(\alpha))]_\U.$$
   By \eqref{x6} and \eqref{x7}  we have
$$\| s^{(k)}_r(v^{(k)} b_d)-S_r^{(k)}  \|_{  B(H^{(k)})\otimes_{\min}({A^{(k)}_1}\oplus {A^{(k)}_2})  } 
 \to 0,$$
 and hence
  \begin{equation}\label{x9}
  \lim\nolimits_\U \| s^{(k)}_r(v^{(k)} b_d)\|_{  B(H^{(k)})\otimes_{\min}({A^{(k)}_1}\oplus {A^{(k)}_2})  } 
= \lim\nolimits_\U \|S_r^{(k)}  \|_{  B(H^{(k)})\otimes_{\min}({A^{(k)}_1}\oplus {A^{(k)}_2})  } 
  .\end{equation}
   By \eqref{ida2}
  \begin{equation}\label{70} \|S_r\|_{B({H^{\U}}  ) \otimes_{\min}({A^\U_1}\oplus {A^\U_2})}=\lim\nolimits_\U\|S_r^{(k)}\|_{B({H^{(k)}}  ) \otimes_{\min}({A^{(k)}_1}\oplus {A^{(k)}_2})}. \end{equation} 
  Now since we assume  \emph{strong} convergence, 
  as explained in \ref{f} we have \emph{isometric embeddings}
  $$  \psi_1   :\ A_1\to  A^\U_1  \text{  and }  \psi_2   :\ A_2\to  A^\U_2$$
  such that
  $$S_r=    (V \otimes W) \otimes [\psi_1\oplus \psi_2] (s _r(b_d))$$
  from which $\|S_r\|=\|s_r(b_d)\|$ follows.
   Thus, by \eqref{x9} and \eqref{70}, we obtain \eqref{x5},
 and this concludes
the proof. 
\end{proof}
 
 We now turn to the situation considered in the abstract.
  \begin{cor}
   Let $(A_1^{(k)},\phi_1^{(k)})$, $(A_2^{(k)},\phi_2^{(k)})$, 
   $(A_1 ,\phi_1)$ and $(A_2 ,\phi_2)$ be faithful $C^*$-probability spaces.
 Let $\{X^{(k)}_m\mid m\in {\cl I} \}\subset A_1^{(k)} $, 
 $\{Y^{(k)}_n\mid n\in {\cl J} \}\subset A_2^{(k)} $ and 
 $\{X_m\mid m\in {\cl I} \}\subset A_1$, $\{Y_n\mid n\in {\cl J} \}\subset A_2$   be    families
 of non-commutative random variables, generating
 respectively $A_1^{(k)},A_2^{(k)},A_1,A_2$.
Assume $\{X^{(k)}_m\mid m\in {\cl I} \}  \stackrel{s}{\to} \{X_m\mid m\in {\cl I} \}$
 and  $\{Y^{(k)}_n\mid n\in {\cl J} \} \stackrel{s}{\to} \{Y_n\mid n\in {\cl J} \}$
 when $k\to \infty $. Assume moreover that, for each $k$, 
 $\{X^{(k)}_m\mid m\in {\cl I} \}$ and $\{Y^{(k)}_n\mid n\in {\cl J} \}$ are $*$-free and also 
 that $\{X_m\mid m\in {\cl I} \}$
 and  $\{Y_n\mid n\in {\cl J} \}$ are $*$-free.
 Then the joint family $\{X^{(k)}_m,Y^{(k)}_n\mid m\in {\cl I}, n\in {\cl J}\}$
 viewed as sitting in the free product $(A_1^{(k)},\phi_1^{(k)})\ast  (A_2^{(k)},\phi_2^{(k)})$
 tends strongly to $\{X_m,Y_n\mid m\in {\cl I}, n\in {\cl J}\}$,
 viewed as sitting in the free product $(A_1 ,\phi_1 )\ast  (A_2 ,\phi_2 )$.
\end{cor}
 \begin{proof}
   Let ${\cl A}_1$ (resp. ${\cl A}_2$) be the unital $*$-algebra generated by 
   non-commutative (i.e. algebraically free) variables 
   $\{x_m,x_m^*\mid m\in {\cl I} \}$ 
 (resp. $\{y_n,y_n^*\mid n\in {\cl J} \}$).
We define the associated state $\phi_1$ on ${\cl A}_1$ by $\phi_1(P)=\phi (P(X_m,X_m^*))$ and similarly for
$\phi_2$ on ${\cl A}_2$. Repeating this for each $k$,
 this leads to states $\phi^{(k)}_i$ on ${\cl A}_i$ for $i=1,2$. We may  
 identify the free product ${\cl A}_1\ast {\cl A}_2$ with the  unital $*$-algebra generated by 
   non-commutative variables 
   $\{x_m,x_m^*,y_n,y_n^* \mid m\in {\cl I}, n\in {\cl J}\}$. Since the GNS
   representations are assumed isometric,
   the Corollary   appears as a particular case of the preceding Theorem.
   \end{proof}
   
   \begin{rem} If a state $\phi$ is faithful on $A$, then
   its associated GNS representation  is faithful. Moreover,
   the GNS representation associated to
   the restriction of  $\phi$ to any unital $C^*$-subalgebra of $A$
  is still faithful, so the requirement that the variables generate
   the $C^*$-algebras can be  dispensed with if we assume all states faithful.
   Moreover, if $\phi$ is faithful on $A$, then for any $x\in A$ we have
   $$\|\pi_\phi(x)\|=\lim_{p\to \infty} \uparrow (\phi ((x^*x)^p ))^{1/2p}.$$
   
   \end{rem}
   
\subsection{}\label{rem2}  With the notation of the Corollary, whenever 
$X^{(k)}=\{X^{(k)}_m, {X_m^{(k)}}^*\mid m\in {\cl I} \}$ on $(A^{(k)},\phi^{(k)})$ converges in moments to  $\{X_m,{X_m}^*\mid m\in {\cl I} \}$ on $(A,\phi)$,
it is well known that
for any non-trivial ultrafilter $\U$ and any polynomial $P$, we have
 \begin{equation}\label{11}  \|P(X )\| \le \lim\nolimits_\U \|P(X^{(k)}  )\|.\end{equation}
Indeed,  
for any $\vp>0$ there are polynomials $Q,R$  with unit norm
in $L_2(\phi)$ so that
$$\|P(X )\| -\vp< |\phi(P(X)Q(X)R(X))|=\lim\nolimits_\U |\phi^{(k)}(P(X^{(k)})Q(X^{(k)})R(X^{(k)}))|\le \lim\nolimits_\U \|P(X^{(k)} )\|,$$
from which \eqref{11} follows.
The converse inequality is the essence of strong convergence.
\section{Commutative case. Random matrices}\label{RM}

 With the same notation as \ref{rem2}, if the variables 
$X=\{X_m, {X_m}^*\mid m\in {\cl I} \}$ all commute
 i.e.  we are dealing with a family $ \{X_m \mid m\in {\cl I} \}$ of commuting normal operators in $B(H)$,  then
 the (commutative)  $C^*$-algebra $A$ they generate in $B(H)$  is isometric
to the $C^*$-algebra $C(K)$ of all continuous functions on a compact set $K$, namely the spectrum of $A$. Moreover, $\phi$ corresponds
to a probability measure on $K$. Assume  ${\cl I}$ finite for simplicity.
Then the situation reduces to the following:
We have a probability measure $\mu$ with support  a compact set
$K\subset \CC^{\cl I}$
and $X_m\in C(K)$ is defined by $X_m(\lambda)=\lambda_m$ for all
$\lambda=(\lambda_m)\in K$.
For any $a\in \cl A$,   $\phi(a)=\int a d\mu$ and $\pi_\phi(a)$ is the operator of multiplication by $a$ on $L_2(\mu)$.
When the family  $(X_m)$   is reduced to a single normal operator $X$,
$K$ is the spectrum of $X$.

Similarly, if all $\{X^{(k)}_m, {X^{(k)}_m}^*\mid m\in {\cl I} \}$ commute, 
we may reduce consideration  to  $A^{(k)}=C(K^{(k)})$
with $\phi^{(k)}=\mu^{(k)}$
for some $K^{(k)}\subset \CC^{\cl I}$
and some probability  $\mu^{(k)}$ with support  $K^{(k)}$,
and again $X^{(k)}_m(\lambda)=\lambda_m$ for all
$\lambda\in K^{(k)}$.

Then $X^{(k)}\stackrel{s}{\to} X$ implies that
for any polynomial $f$ in $\lambda_m, \bar \lambda_m$ we have
  \begin{equation}\label{x34}\sup_{\lambda\in K^{(k)}} |f(\lambda)| \to \sup_{\lambda\in K} |f(\lambda)|.\end{equation}
Taking $f(\lambda)=\lambda_m$ we see that 
the sets $\cup_k  K^{(k)}\subset \CC$  and $K$ are all included
in some compact set $L\subset \CC^{\cl I}$. By the Stone-Weierstrass theorem on $L$,
\eqref{x34} remains valid for any continuous function on $L$.
Applying this to $f(\lambda)=d(\lambda,K)$ we find
 \begin{equation}\label{x33}\lim_{k\to \infty}\sup_{\lambda\in K^{(k)}} d(\lambda, K)= 0.\end{equation}

  The mere convergence in moments $X^{(k)}{\to} X$ is equivalent
  (by  Stone-Weierstrass on $L$)
to the weak convergence $\mu^{(k)}{\to} \mu$. 
In the  language of probability, the latter means that
$X^{(k)}{\to} X$ ``in distribution", or ``in law".
 Thus
$X^{(k)}\stackrel{s}{\to} X$ iff $\mu^{(k)}{\to} \mu$ weakly and \eqref{x33} holds.

Wigner's classical theorem about the convergence of the eigenvalues
of Gaussian   random matrices was strengthened in \cite{HT} as follows:
let $X^{(k)}(\omega)$ be a    random $k \times k$-matrix the entries of which are 
 independent  complex Gaussian  $N(0,k^{-1})$, then for almost all $\omega$, the \emph{nonnormal} random matrices 
$X^{(k)}(\omega)$ tend strongly to a circular random variable.
A similar result is valid for the classical Gaussian Wigner Hermitian (and hence normal) matrices
(model for the so-called GUE) now with  a semi-circular limit. 
 In the latter case, if 
 $\mu^{(k)}(\omega)$ is the spectral probability distribution of the eigenvalues of $X^{(k)}(\omega)$, with support $K^{(k)}(\omega)$, then
  $\mu^{(k)}(\omega)$ tends weakly   to the ``circular" probability
   measure  on $K=[-2,2]$ and
 \eqref{x33} holds for almost all $\omega$. A similar result
 holds for random unitary $k \times k$-matrices. In that case
 the limit $\mu$ is the uniform Haar probability on the unit circle.
In Random Matrix Theory,    \eqref{x33}  for the spectra of random matrices, is viewed as a result on ``the edge of the spectrum",
while mere weak convergence deals with 
``the bulk of the spectrum".\\
See  \cite{HT,CM,HST,Sc,A} for more general results.\\

\section{Strong convergence and operator spaces}\label{OS}

Let $( A , \phi )$ and $( A^{(k)}, \phi^{(k)})$ be  $C^*$-probability spaces,
with $\phi$ and $\phi^{(k)}$ all   faithful.
For any $N\ge 1$, we equip $M_N(A)$ 
with the faithful  state $\phi\otimes \tau_N$ where $\tau_N$ is the normalized trace
on $M_N$
(and similarly for  $M_N(A^{(k)})$).
Let $\{X _m\mid m\in {\cl I} \}\subset A $  and  $\{X^{(k)}_m\mid m\in {\cl I} \}\subset A^{(k)}$. \\ If $\{X ^{(k)}_m\mid m\in {\cl I} \}\stackrel{s}\to \{X _m\mid m\in {\cl I} \}$, 
then for any $N$,   any $a\in M_N$ and any finitely supported
family $\{a _m\mid m\in {\cl I} \}\subset M_N $, we have
\begin{equation}\label{x35p}1\otimes a+\sum\nolimits_m   X ^{(k)}_m\otimes a_m \stackrel{s}\to 1\otimes a+\sum\nolimits_m   X _m\otimes a_m .\end{equation} 
In particular we have for any such $N,a,a_m$ and for any non trivial $\U$
 \begin{equation}\label{x35}\lim_\U \|1\otimes a+\sum\nolimits_m   X ^{(k)}_m\otimes a_m \|_{M_N(A^{(k)})}=
\|1\otimes a+\sum\nolimits_m   X_m\otimes a_m \|_{M_N(A )}.
\end{equation}
Indeed, the strong convergence implies that the mapping
 $u:\ P(X _m)\mapsto P(X^\U _m)$ is an isometric, 
 and hence   completely isometric, $*$-homomorphism.
 Therefore if we restrict $u$ to the linear span, denoted by $E$,
 of the unit and $\{X _m\mid m\in {\cl I} \}$ then we obtain 
 \eqref{x35}. Let $S=1\otimes a+\sum\nolimits_m   X_m\otimes a_m$,
 and let $P$ be a polynomial in noncommuting variables $x,x^*$.
 Then $P(S)=  1\otimes a'+\sum\nolimits_m   P_m(X_m)\otimes a'_m$
 for some $*$-polynomials $(P_m)$ and some  $a',a' _m\in M_N$.
 Clearly if $ (X^{(k)}_m)\stackrel{s}\to  (X_m)$
 then $(P_m(X^{(k)}_m))\stackrel{s}\to (P_m(X_m))$.
 Therefore, \eqref{x35} applied with
      $(P_m(X^{(k)}_m))$ and $(P_m(X_m))$ 
  in place of    $X^{(k)}_m$ and $X_m$  gives us \eqref{x35p}.
 
 In the converse direction, we have the following two ``linearization tricks" :
 \begin{pro}[\cite{Pi}] If the operators $\{X ^{(k)}_m\mid m\in {\cl I} \}$ converge  in moments and are all  unitary, then  \eqref{x35} implies $\{X ^{(k)}_m\mid m\in {\cl I} \}\stackrel{s}\to \{X _m\mid m\in {\cl I} \}$.
 \end{pro} \begin{proof}
 Indeed, by \cite[Prop.1.7]{Pi}, if  $u_{|E}$   is completely contractive, then $u:\  P(X _m)\mapsto P(X^\U _m)$
 is contractive. By  \ref{rem2}, since we assume
 convergence in moments, $u$ is isometric.
   \end{proof}
  \begin{pro}[\cite{HT}] 
If the operators $\{X ^{(k)}_m\mid m\in {\cl I} \}$ converge  in moments and are all Hermitian, then   \eqref{x35p} 
 restricted to Hermitian matrices $a,a_m$ implies that  $\{X ^{(k)}_m\mid m\in {\cl I} \}\stackrel{s}\to \{X _m\mid m\in {\cl I} \}$.
 \end{pro}
  \begin{proof}
 Indeed, let $S^{(k)}=1\otimes a+\sum\nolimits_m   X ^{(k)}_m\otimes a_m$.
 If $S ^{(k)}$ is Hermitian, and $S^{(k)}\stackrel{s}\to S$ then
 \eqref{x33} holds, $K^{(k)}$ and $K$ being the spectra
 of $S^{(k)}$ and $S$.  This implies
 that the spectrum of $S ^{\U}$ is included in that of $S$. Thus the Proposition follows from \cite[Th.2.2]{HT} (recalling \ref{rem2}).
 \end{proof}
\section{Amalgamated free products}

Since the Ricard-Xu version of the Khintchine inequalities extends
to reduced free products with amalgamation (see \cite[\S 5]{RX}), it is not surprising
that the preceding approach also does, as was kindly pointed out to the author independently by \'Eric Ricard and Paul Skoufranis.

We should first define what is meant by strong convergence
in this framework.

Let $A$ be a unital $C^*$-algebra, with a unital $C^*$-subalgebra
$D\subset A$.
Let 
$\phi:\ A\to D$  be   a conditional expectation. 
As in \cite[p. 138]{BO} we denote by $L_2(\phi)$
 the  right Hilbert $D$-module obtained by the classical
 GNS construction. We refer to \cite{La,BLM} for more information on Hilbert $C^*$-modules. The
 $D$-valued inner product of two elements
 $\dot a,\dot b$ of $L_2(\phi)$ associated to
$a,b\in A$ is defined by $\langle \dot a,\dot b \rangle=\phi(a^*b)$.
We have $\| \dot a \|_{L_2(\phi)}=\| \langle \dot a,\dot a \rangle \|_D^{1/2}$.
We will denote by $\pi_\phi:\ A\to {\bb B}(L_2(\phi))$ the
associated representation of $A$ in the $C^*$-algebra ${\bb B}(L_2(\phi))$ of adjointable
maps on $L_2(\phi)$, so that $\pi_\phi(a) \dot b=\dot{ab}$. 
Let $\xi_\phi=\dot 1\in L_2(\phi)$ so that $\dot{a} =\pi_\phi(a) \xi_\phi$. 
Then $\pi_\phi(A) \xi_\phi$ is dense in $L_2(\phi)$,
and $\phi(a)=\langle  \xi_\phi, \pi_\phi(a) \xi_\phi\rangle$. 
We will denote simply $\pi_\phi(a) \xi_\phi=a \xi_\phi $.

We say that a  conditional expectation $\phi$ is nondegenerate
if $\pi_\phi$ is faithful (i.e. isometric).
We should warn the reader
that although this (well established) notation does not reflect it,
the preceding notions obviously all depend on $D$.

 Let $H=L_2(\phi)$. By definition any adjointable $T:\ H \to H$ admits
 an adjoint $T^*:\ H \to H$ such that, as usual, $\langle T^* y, x\rangle =\langle   y, Tx\rangle
 $.\\
Let   $H^*={\bb B}(H,D)$.
Clearly, $H^*$ is antilinearly isomorphic to $H$, via the correspondence
$h\in H \mapsto t_h\in H^*$ defined by $t_h(y)= \langle h, y\rangle$.
The space $H^*$ is equipped with the left Hilbert $D$-module structure
defined by $  d\cdot  t_h= t_{h d^*} $, and $D$-valued inner product
$\langle t_k, t_h\rangle= \langle h, k\rangle$ ($h,k\in H$).
Let $\xi^{op}\in H^*$ be defined by $\xi^{op} (h)=\langle \xi,h\rangle=\phi(h)$.
We have a natural
embedding $ { A}^{op} \subset H^*$
 that we will write
as $a\mapsto \xi^{op} a$, with the notation $(fa  )(x)=f(ax)$ when
$f\in H^*$, $x\in H$, $a\in A$. Then   $$\| \xi^{op} a\|_{H^*}=\|\phi(aa^*)\|_D^{1/2},$$ and $\xi^{op} A$ is dense in $H^*$.
\\ We have  a natural representation
$\pi^{op}_\phi:\ { A}^{op}\to {\bb B}( H^*)$, defined as before: 
$\pi^{op}_\phi(a)$ is the transpose of the adjointable map $\pi_\phi(a)$,
i.e. $\pi^{op}_\phi(a)= {}^t \pi_\phi(a)$, 
in the following sense: The operator ${}^t T:\ H^*\to H^*$
is characterized by the identity $ {}^t T (\xi^{op} a) (x)=\phi ( aT (x))$ ($a\in A,x\in H$). \\Moreover $\|{}^t T\|_{{\bb B}( H^*)}=\|T\|_{{\bb B}( H)}$.
Equivalently, if we define $\bar h\in H^*$ 
by $\bar h (x)= \langle h,x\rangle$, i.e. we set $\bar h=t_h$,  then we may write
$${}^t T(\bar h)= \overline {T^* (h)}.$$

Let ${\cl A} $ be a dense unital  $*$-subalgebra of a $C^*$-algebra $B$
such that $D\subset {\cl A}\subset B$.
We will say that a mapping $ \phi :\ {\cl A}\to D$ is a  conditional expectation
if it extends to a   conditional expectation on $B$ with range $D$.  
The GNS construction
produces a Hilbert $D$-module $H=L_2(\phi)$ with distinguished unit vector $\xi_\phi$
and a $*$-homomorphism $\pi_\phi:\ 
{\cl A} \to \B(H)$ such that $\phi(x)=\langle \xi_\phi,\pi_\phi(x)\xi_\phi\rangle$
for any $x\in \cl A$.
\\ Let $A=\overline{\pi_\phi(\cl A)}$. Then
for any $d\in D$, $ {\pi_\phi(d)} $ is the left action of $d$
on $H$. Thus
 $ {\pi_\phi(D)} $ is a copy
of $D$ in $A$ and $\psi:\ x\mapsto \pi_\phi (\langle \xi_\phi,x\xi_\phi\rangle)$ is a  conditional expectation from $A$ onto $ {\pi_\phi(D)} \simeq D$.

We will say that a sequence $ \phi^{(k)} :\ {\cl A}\to D$  ($k\in \N$)  
 of   conditional expectations tends strongly to $\phi:\ {\cl A}\to D$,
 if it converges pointwise to $\phi$ and if moreover
we have $\|\pi_{\phi^{(k)}}(a) \|\to \|\pi_{\phi}(a) \|$ for any $a\in {\cl A}$.
We denote this again  by  $ \phi^{(k)}  \stackrel{s}\to \phi $.

Let $B_i$ ($i\in I$) be a family of unital $C^*$-algebras.
Let  ${\cl A}_i\subset B_i$ be dense  $*$-subalgebras, 
 each ${\cl A}_i $ containing a unital copy of a fixed $C^*$-algebra $D$, given with     conditional expectations $ \phi_i :\ {\cl A}_i\to D$.
 Let  $\cl A$ denote the $*$-algebra that is the algebraic  free product, amalgamated over $D$,
 of the family $({\cl A}_i)_{i\in I}$. 
 The   free product
of the family of conditional expectations $ \phi_i :\ {\cl A}_i\to D$ will be defined below 
as  a conditional expectation $ \phi :\ {\cl A}\to D$, but
we need more notation to make this clear, in part because
the free product requires  nondegenerate  conditional expectations.

Let $H_i=L_2(\phi_i)$ and $\xi_i=\xi_{\phi_i}$.
Let $A_i=\ovl{\pi_{\phi_i } ({\cl A}) } \subset \B(H_i)$.
Then $D$ can be identified with $\pi_{\phi_i } (D) \subset A_i$ isometrically, and
  $\psi_i(x) =\pi_{\phi_i }(\langle \xi_i,x\xi_i\rangle)$
 is a \emph{nondegenerate}  conditional expectation from $A_i$ onto $\pi_{\phi_i }(D)\simeq D$.

 We refer the reader to \cite[p. 138]{BO} for the precise definition of the 
reduced free product $A=\ast_{D}(A_i,\psi_i)$ of the  family $(A_i)_{i\in I}$ with respect to \emph{nondegenerate} conditional expectations
$ (\psi_i)_{i\in I} $: One first introduces the
right
Hilbert $D$ module that is the  free product $(H,\xi)$
of the family $ (H_i,\xi_i)$, and the $*$-homomorphism
$\pi: \cl A \to {\bb B} (H)$ 
associated to the 
algebraic free product of the  family of morphisms
 $\pi_{\phi_i}: \cl A_i \to A_i$
(acting on $H$ on the left). Let  $A=\ovl{\pi(\cl A)}\subset {\bb B} (H)$. Lastly 
  $\ast_{i\in I} \phi_i:\ {\cl A}\to D$ is defined by   $$(\ast_{i\in I} \phi_i)(a)=   \langle  \xi, \pi(a) \xi\rangle.$$ 
  This  is  a    conditional expectation  
extending each $\phi_i$.


\begin{thm}\label{t3a} Let $D\subset {\cl A}_i $ ($i\in I$) be a family of unital $C^*$-algebras, as above.
 Let $\{\phi^{(k)}_i\mid i\in I\}$  ($k\in \N$) be a sequence of families of mappings,
 each $\phi^{(k)}_i$ being a    conditional expectation from
  ${\cl A}_i$ onto $D $.  
 Assume that  
 we have   conditional expectations $\phi_i:\ {\cl A}_i\to D$  such that, 
 for each $i\in I$,   
 $ \phi^{(k)}_i \stackrel{s}\to \phi_i$   when $k\to \infty$.
 Then $$*_{i\in I}\phi^{(k)}_i \stackrel{s}\to *_{i\in I}\phi_i .$$
 \end{thm}

In the  amalgamated case, 
\eqref{rx0+} remains valid provided the norms of $s_r$ and $t_r$ are interpreted
as follows. 
Assume $I=\{1,2\}$ for simplicity.
As before $\cl A=\cup_d {\cl W}_{\le d}$ with the same meaning of 
${\cl W}_{\le d}$ or ${\cl W}_d$.

Let  $b \in  \cl A$. Let $b=\sum b_d$ be its decomposition
into homogeneous terms.
Fix $d$. We may assume
 that $b_d$  is a finite sum of the form 
 $ b_d =\sum^N_{\alpha=1} b(\alpha)$ with $b(\alpha) $ of the form
   $ b(\alpha)= x_1(\alpha)\cdots x_d(\alpha)$
We then set as before
  \begin{equation}\label{x13} t_r(b_d)=\sum\nolimits_\alpha x_1(\alpha)\cdots x_r (\alpha) \xi  \otimes \xi^{op}   x_{r +1}(\alpha) \cdots  x_d(\alpha)\in H\otimes H^*\end{equation}
and
 \begin{equation}\label{x14}  
 s_r(b_d)= \sum\nolimits_\alpha x_1(\alpha)\cdots x_{r-1} (\alpha)  \xi \otimes  \xi^{op}  x_{r +1}(\alpha) \cdots  x_d(\alpha)\otimes J(x_r)\in H\otimes H^*\otimes [A_1\oplus A_2].\end{equation}
Then $\|t_r(b_d)\|$ is the norm in $\bb B (H)$
and $\|s_r(b_d)\|$ is its norm in ${\bb B} (H)\otimes_{\min} [A_1\oplus A_2].$

With this reinterpretation, \eqref{x11}
remains valid (see \cite[\S 5]{RX}).
Thus, with the obvious extension of the previous notation, the proof of Theorem \ref {t3a} boils down
to check that we still have \eqref{x5}.
One difficulty is that Hilbert modules do not necessarily
admit orthonormal bases, so that we cannot argue
as we did above to check \eqref{x12}. Instead,
following Ricard and Xu in \cite{RX}, we 
restrict without loss of generality to
the case of separable $C^*$-algebras and
a countable set of indices $I$. 
To justify this reduction to the separable case,
observe that for any countable subset $S\subset A$
in a unital $C^*$-algebra $A$ equipped with a conditional expectation
$\phi:\ A\to D$,  there are separable unital $C^*$-subalgebras $\tilde A\subset A$ and $\tilde D\subset D\cap \tilde A$ such that 
$\phi_{|\tilde A}$ is a conditional expectation onto $\tilde D$.
Using this, we may assume all our Hilbert modules   countably generated
 and, as in \cite{RX}, we can use
Kasparov's absorption Theorem,  for which we refer to \cite[p. 60]{La}.
The latter says that for any countably generated Hilbert module $H$
there exists a (contractive and $D$-modular) factorization of the identity of $H$
through the standard (column) module
formed of sequences $x=(x_n)\in D^{\N_*}$
such that the series $\sum x_n^*x_n $ converges in norm in $D$,
equipped with the $D$-valued inner product
such that $\langle x,x \rangle= \sum x_n^*x_n $.
We denote this module by $C(D)$. Let $C_n(D)$ be the submodule
formed of all $x=(x_n)\in D^{\N_*}$ supported in $[1,n]$. Then
the union $\cup_n C_n(D)$  is norm dense in  $C(D)$.
It follows that there is a sequence
of contractive module mappings $F_n :\ H \to C_n(D)$
and  $G_n :\ C_n(D) \to H$ such that
  \begin{equation}\label{x27} G_nF_n(x)\to x \text{ for any } x\in H \text{
and }(G_nF_n)^{op}(y)\to y \text{ for any } y\in H^*.
\end{equation}
We will apply this to $H=L_2(\phi)$.
Recall that the algebraic free product $\cl A$ is such that
${\cl A} \xi $ is dense in $H$. We may assume that $\|F_n\|<1$
and $\|G_n\|<1$. Then by an elementary approximation argument
we may assume that, for each $n$, there are 
$\{ p^n_j\mid 1\le j\le n\} \subset \cl A$
and 
$ \{ q^n_j\mid 1\le j\le n\} \subset \cl A$ such that
$$\forall x\in H\ \forall  (c_j) \in C_n(D)\quad F_n(x)=(\phi( {p^n_j}^*x )) \text{ and } 
G_n((c_j) )= \sum\nolimits_j  q^n_j  c_j\xi .$$
Note that 
$$\|F_n\|= \| \sum\nolimits_j  \phi( {p^n_j}^*  p^n_j )\|^{1/2}
\text{ and }  \|G_n\|= \| \sum\nolimits_j \phi( q^n_j {q^n_j} ^*  )\|^{1/2}.
$$
By the pointwise convergence   $\phi_k\to \phi$, for any fixed $n$ we have
$$\lim\nolimits_{k,\U}  \| \sum\nolimits_j  \phi^{(k)}( {p^n_j}^*  p^n_j )\|^{1/2}<1
 \text{ and } \lim\nolimits_{k,\U}  \| \sum\nolimits_j  \phi^{(k)}(q^n_j {q^n_j}^* )\|^{1/2}<1 .$$
 Equivalently, if we define $F^{(k)}_n:\ H^{(k)}\to  C_n(D)$
 and  $G^{(k)}_n:\ C_n(D)\to  H^{(k)}$ by\\
  $F^{(k)}_n(x\xi^{(k)})=(\phi^{(k)}( {p^n_j}^*x ))$
 and $G^{(k)}_n((c_j))=\sum\nolimits_j q^n_j  c_j \xi^{(k)} $. Then
 for all $k$ large enough we have
 \begin{equation}\label{x16}    \| F^{(k)}_n \|_{\B(H^{(k)}, C_n(D))}<1
 \text{ and }  \| G^{(k)}_n \|_{\B(C_n(D) , H^{(k)}  ) }=\| G^{(k)op}_n \|_{\B({H^{(k)}}^*, {C_n(D)}^*)} <1 .\end{equation}
\begin{lem}\label{x17}
Fix $\vp>0$ and $N\ge 1$. Let $x(\alpha), y(\alpha)\in \cl A$  $(1\le \alpha \le N)$,
Then there is an  $n$ such that
\begin{equation}\label{x18}\forall  \alpha \le N\quad  \| x(\alpha) \xi  -G_nF_n x(\alpha) \xi\|_H<\vp
\text{ and }  \| \xi^{op} y(\alpha)  -(G_nF_n)^{op} \xi^{op} y(\alpha) \|_{H^*}<\vp \end{equation}
and moreover such that for all $k$ large enough we have
 $$ \| x(\alpha) \xi^{(k)}\|_{H^{(k)}} \le (1+\vp) \| x(\alpha) \xi\|_{H }\text{ , } 
\| \xi^{(k){op}} y(\alpha) \|_{{H^{(k)}}^*} \le (1+\vp) \|\xi^{op} y(\alpha)\|_{H^* } $$
$$  \| x(\alpha) \xi^{(k)} -G^{(k)}_nF^{(k)}_n x(\alpha) \xi^{(k)}  \|_{H^{(k)}}<\vp
\text{ and } 
 \| \xi^{(k){op}} y(\alpha) -(G^{(k)}_nF^{(k)}_n)^{op} \xi^{(k){op}}y(\alpha) \|_{{H^{(k)}}^*} <\vp.$$
\end{lem}
\begin{proof}   
For any $ x\in \cl A$, and any $n$ we have $$G_nF_n x \xi  = \sum_j q^n_j  \phi( {p^n_j }^*x) \text{  and  } 
G^{(k)}_nF^{(k)}_n x \xi^{(k)}  = \sum_j q^n_j  \phi^{(k)}( {p^n_j }^*x).$$
 Therefore
\begin{equation}\label{x19}\lim_\U \|  x \xi^{(k)} -G^{(k)}_nF^{(k)}_n x \xi^{(k)}\|_{H^{(k)}}= \| x  \xi  -G_nF_n x  \xi\|_H
\text{ and } \lim_\U \|  x \xi^{(k)} \|_{H^{(k)}}= \| x  \xi    \|_H
.\end{equation} 
By \eqref{x27}, we can choose an
 $n$ large enough such that \eqref{x18} holds. Then the other conditions
 can be achieved using \eqref{x19}. 
\end{proof}
\begin{proof}[Proof of Theorem \ref{t3a}] By the above observations,
it suffices to check that we still have \eqref{x5}.\\
We may assume $t_r(b_d)=\sum\nolimits_\alpha x(\alpha) \xi \otimes \xi^{op} y(\alpha)$, so that
$$( F_n\otimes G_n^{op})[ t_r(b_d)]=\sum\nolimits_\alpha  F_n(x(\alpha) \xi )\otimes  G_n^{op}(\xi^{op} y(\alpha))\in C_n(D)\otimes{C_n(D)}^* .$$
The latter defines a mapping in $\B(C_n(D))$  defined by
$$(d_j)\mapsto (\sum\nolimits_j a_{ij} d_j)_i$$
where
 \begin{equation}\label{x30}   a_{ij}=\sum\nolimits_\alpha  \phi ({p^n_i}^* x (\alpha))\phi(   y(\alpha) {q^n_j}).\end{equation}
By Lemma \ref{x17} there is an  $n$ such that
$$      \| t_r(b_d)\|_{\B(H)}  \le (1+\vp)  \| (G_nF_n)\otimes (G_nF_n)^{op}[ t_r(b_d)]\|_{\B(H)}
$$
and such that for all $k$ large enough (recall that, by \eqref{as55}, $v^{(k)} b_d$
is just a perturbation of $b_d$)
$$      \| t^{(k)}_r(v^{(k)} b_d)\|_{\B(H^{(k)})} \le (1+\vp)  \| (G^{(k)}_nF^{(k)}_n)\otimes (G^{(k)}_nF^{(k)}_n)^{op}[ t^{(k)}_r(v^{(k)} b_d)]\|_{\B(H^{(k)})}.
$$
Since $F_n,G_n$ are contractions 
\begin{equation}\label{x15}   
   \| t_r(b_d)\|_{\B(H)} \le (1+\vp)  \|  ( F_n\otimes G_n^{op})[ t_r(b_d)]\|_{\B(C_n(D))}
\le  (1+\vp) \| t_r(b_d)\|_{\B(H)}.
\end{equation}
and using \eqref{x16}
\begin{equation}\label{x21}      \| t^{(k)}_r(v^{(k)} b_d)\|_{\B(H^{(k)})} \le (1+\vp)  \|  ( F^{(k)}_n\otimes  G^{(k)op}_n)[ t^{(k)}_r(v^{(k)} b_d)]\|_{\B(C_n(D))}
\le  (1+\vp)  \| t^{(k)}_r(v^{(k)} b_d)\|_{\B(H^{(k)})}.
\end{equation}
But now assuming $t_r(b_d)=\sum_\alpha x(\alpha) \xi \otimes \xi^{op} y(\alpha)$, then 
$( F_n\otimes G_n^{op})[ t_r(b_d)]$ acts on $C_n(D)$
as the matrix $a=[a_{ij}]\in M_n(D)$ defined by \eqref{x30}.
Thus we have by \eqref{x15} 
$$ \| t_r(b_d)\|_{\B(H)} \le (1+\vp)  \| [a_{ij}]\|_{M_n(D)}
\le  (1+\vp) \| t_r(b_d)\|_{\B(H)}.$$
Let
$$a^{(k)}_{ij}= \sum\nolimits_\alpha  \phi^{(k)} ({p^n_i}^* [v^{(k)} x (\alpha)  ] )\phi^{(k)}     ( [v^{(k)} y(\alpha) ] {q^n_j} ).$$
Then by \eqref{x21}
$$ \| t^{(k)}_r(v^{(k)} b_d)\|_{\B(H^{(k)})} \le (1+\vp)  \| [a^{(k)}_{ij}]\|_{M_n(D)}
\le  (1+\vp) \| t^{(k)}_r(v^{(k)} b_d)\|_{\B(H^{(k)})}.$$
But since $\phi^{(k)} \to \phi$ pointwise on $\cl A$,
we have $\lim\nolimits_\U a^{(k)}_{ij}=a_{ij}$ and hence
passing to the limit in $k$ in the last two equivalences
we obtain 
$$ \| t_r(b_d)\|_{\B(H)} \le (1+\vp) \lim\nolimits_\U  \| t^{(k)}_r(v^{(k)} b_d)\|_{\B(H^{(k)})}\le (1+\vp)^2   \| t_r(b_d)\|_{\B(H)}.$$
Since $\vp>0$ is arbitrary, this establishes the first part of \eqref{x5}. \\
The second part can be checked by a similar argument: Using $F_n,G_n$
we are led to compare the norm of  $s_r(b_d)$
in $H_C \otimes_{hD} [A_1\oplus A_2] \otimes_{hD} (H^*)_R$ (as described in \cite[Prop. 5.1]{RX})
with the norm in $C_n(D)\otimes_{hD} [A_1\oplus A_2] \otimes_{hD} C_n(D)^*$, but the latter is isometric to $M_n(A_1\oplus A_2)$.
We skip the remaining details. \end{proof}
 
\bigskip
 
\bigskip
 
\n\textit{Acknowledgement.}  I am very grateful to Claus Koestler,
 \'Eric Ricard, Mikael de la Salle  and Paul Skoufranis for useful communications.

 \end{document}